\documentclass[american, 5p]{elsarticle}
\pdfoutput=1
\biboptions{longnamesfirst, sort&compress}

\usepackage[utf8]{inputenc}
\usepackage[T1]{fontenc}
\usepackage{lmodern}
\usepackage[babel=true]{microtype}

\usepackage{babel}
\usepackage{amsfonts, amsmath, amsopn, amssymb, amsthm, bm, braket}
\usepackage{enumitem}

\usepackage{pgfplots}
\pgfplotsset{compat=newest}
\usetikzlibrary{calc}

\PassOptionsToPackage{hyphens}{url}
\usepackage{hyperref}
\hypersetup{%
   pdfauthor={Weizhang Huang, Lennard Kamenski, Jens Lang},
   pdfsubject={2010 MSC: 65M60, 65M50, 65F15},
   pdftitle={Stability of explicit Runge-Kutta methods
      for high order finite element approximation
      of linear parabolic equations},
}
\usepackage[capitalize]{cleveref}
\crefname{equation}{}{}
\crefname{enumi}{}{}

\newtheorem{theorem}{Theorem}
\newtheorem{lemma}{Lemma}
\newtheorem{corollary}{Corollary}
\theoremstyle{definition}
\newtheorem{remark}{Remark}
\newtheorem{example}{Example}

\DeclareMathOperator{\diag}{diag}
\DeclareMathOperator{\meas}{meas}
\providecommand{\from}{\colon}
\newcommand{\Diff}{\mathbb{D}}
\newcommand{\dx}{\;\mathrm{d}\bm{x}}
\newcommand{\dxi}{\;\mathrm{d}\bm{\xi}}
\newcommand{\be}{\bm{e}}
\newcommand{\bu}{\bm{u}}
\newcommand{\bU}{\bm{U}}
\newcommand{\bv}{\bm{v}}
\newcommand{\bx}{\bm{x}}
\newcommand{\bxi}{\bm{\xi}}
\newcommand{\MA}{M^{-1} A}
\newcommand{\tMA}{\tM^{-1} A}
\newcommand{\FF}{{(F_K')}^{-1} {(F_K')}^{-T}}
\newcommand{\FDF}{{(F_K')}^{-1}\Diff{(F_K')}^{-T}}
\newcommand{\Abs}[1]{{\left\lvert#1\right\rvert}}
\newcommand{\abs}[1]{{\lvert#1\rvert}}
\newcommand{\Norm}[1]{{\left\lVert#1\right\rVert}}
\newcommand{\norm}[1]{{\lVert#1\rVert}}
\newcommand{\NormE}[1]{{|||#1|||}}
\newcommand{\tM}{\tilde{M}}
\newcommand\Th{\mathcal{T}_h}


\begin{document}

\date{}

\begin{frontmatter}

\journal{Numerical Mathematics and Advanced Applications --- ENUMATH 2013}
\title{%
   Stability of~explicit Runge-Kutta methods
   for~high order finite element approximation
   of~linear parabolic equations\tnoteref{t1}%
}
\tnotetext[t1]{%
 The work was supported in part by the NSF (U.S.A.) under Grant DMS-1115118,
the Excellence Initiative of the German Federal and State Governments,
and the Graduate School of Engineering at the Technische Universit{\"a}t Darmstadt.%
}

\author[addressKU]{Weizhang Huang}
\ead{whuang@ku.edu}

\author[addressWIAS]{Lennard Kamenski}
\ead{kamenski@wias-berlin.de}

\author[addressCSI]{Jens Lang}
\ead{lang@mathematik.tu-darmstadt.de}

\address[addressKU]{%
   Department of~Mathematics, University of~Kansas, Lawrence, KS~66045, USA}
\address[addressWIAS]{%
   Weierstrass Institute for Applied Analysis and Stochastics, Berlin, Germany}
\address[addressCSI]{%
   Department of~Mathematics, Graduate School of~Computational Engineering,
   and~Center of~Smart Interfaces, TU Darmstadt, Germany}


\begin{abstract}
We study the stability of explicit Runge-Kutta methods for high order Lagrangian finite element approximation of linear parabolic equations and establish bounds on the largest eigenvalue of the system matrix which determines the largest permissible time step.
A bound expressed in terms of the ratio of the diagonal entries of the stiffness and mass matrices is shown to be tight within a small factor which depends only on the dimension and the choice of the reference element and basis functions but is independent of the mesh or the coefficients of the initial-boundary value problem under consideration.
Another bound, which is less tight and expressed in terms of mesh geometry, depends only on the number of mesh elements and the alignment of the mesh with the diffusion matrix.
The results provide an insight into how the interplay between the mesh geometry and the diffusion matrix affects the stability of explicit integration schemes when applied to a high order finite element approximation of linear parabolic equations on general nonuniform meshes.
\end{abstract}

\begin{keyword}%
   finite element method, anisotropic mesh, stability condition, parabolic equation
   \MSC[2010]
   65M60, 
   65M50, 
   65F15  
\\[0.6\baselineskip]%
{\footnotesize{%
This is a preprint of a contribution to
A.~Abdulle et al.~(eds.),
\emph{Numerical Mathematics and Advanced Applications --- ENUMATH~2013},
Lecture Notes in Computational Science and Engineering, vol.\ 103.
\\%
\textcopyright~Springer International Publishing Switzerland 2015.
The final version is available at \url{https://doi.org/10.1007/978-3-319-10705-9_16}.
}}%
\end{keyword}

\end{frontmatter}

\section{Introduction}
We consider the initial-boundary value problem (IBVP)
\begin{align}
   \begin{cases}
   \begin{alignedat}{3}
      & u_t = \nabla \cdot \left(\Diff \nabla u \right),
      && \qquad \bx \in \Omega,
      &\quad t \in \left( 0, T \right],
      \\
      & u(\bx, t) = 0,
      && \qquad \bx \in \Gamma_D,
      &\quad t \in \left( 0, T \right],
      \\
      & \Diff \nabla u(\bx, t) \cdot \bm{n} = 0,
      && \qquad \bx \in \Gamma_N,
      &\quad t \in \left( 0, T \right],
      \\
      & u(\bx, 0) = u^0(\bx),
      && \qquad \bx \in \Omega,
   \end{alignedat}
   \end{cases}
   \label{eq:IBVP}
\end{align}
where $\Omega \subset \mathbb{R}^d$ ($d \ge 1$) is a bounded polygonal or polyhedral domain, $\Gamma_D \cup \Gamma_N = \partial \Omega$, $\meas_{d-1} \Gamma_D > 0$, $u^0$ is a given function, and $\Diff = \Diff(\bx)$ is the diffusion matrix, which is assumed to be time-independent, symmetric and uniformly positive-definite on $\Omega$.
If $u^0\in H^1_{D}(\Omega) = \set{v \in H^1(\Omega) : \text{$v = 0$ on $\Gamma_D$}}$ and $u$ is sufficiently smooth, then the solution of the IBVP satisfies the stability estimates
\begin{align*}
   \begin{cases}
   \begin{alignedat}{2}
      \Norm{u(\cdot,t)}_{L^2(\Omega)} 
      &\le \Norm{u^0}_{L^2(\Omega)},
         && \qquad t \in \left( 0, T \right],\\[2mm]
         \NormE{u(\cdot,t)}
      &\le \NormE{u^0},
         && \qquad t \in \left( 0, T \right],
   \end{alignedat}
   \end{cases}
\end{align*}
where $\NormE{u} =\Norm{\Diff^{1/2}\nabla u}_{L^2(\Omega)}$ is the energy norm.
We are interested in the stability conditions so that the numerical approximation preserves these stability estimates.

The stability of explicit Runge-Kutta methods depends on the largest eigenvalue of the corresponding system matrix, which, in turn, depends on the mesh and the coefficients of the IBVP.\@
For our model problem this means that we need to estimate the largest eigenvalue of $\MA$, where $M$ and $A$ are the mass and stiffness matrices for the finite element discretization of the IBVP \cref{eq:IBVP}~\cite[Theorem~3.1]{HuaKamLan13b}.
For the Laplace operator on a uniform mesh it is well known that $\lambda_{\max}(\MA) \sim N^{2/d}$, where $N$ is the number of mesh elements.
For general meshes and diffusion coefficients, estimates have been derived recently
in Huang et al.~\cite{HuaKamLan13b} and Zhu and Du~\cite{ZhuDu11,ZhuDu14} (see also~\cite{DuWanZhu09,Fri73,GraMcL06,KamHuaXu13} for estimates on $M$ and $A$).
All of these works allow anisotropic diffusion coefficients and anisotropic meshes, while the former employs a more accurate measure for the interplay between the mesh geometry and the diffusion matrix and gives a sharper estimate on $\lambda_{\max}(\MA)$ than the latter.
On the other hand,~\cite{HuaKamLan13b} considers only linear finite elements whereas the estimates in~\cite{ZhuDu14} are valid for both linear and higher order finite elements.

The purpose of this paper is to extend the result of~\cite{HuaKamLan13b} to high order Lagrangian finite elements
as well as provide a mathematical understanding of how the interplay between the mesh geometry and the diffusion matrix affects the stability condition.
We show that the main result of~\cite[Theorem~3.3]{HuaKamLan13b} holds for high order finite elements as well.
The analysis is based on bounds on the mass and stiffness matrices.
We follow the approach in~\cite{HuaKamLan13b,KamHuaXu13} and derive simple but accurate bounds for the case of high order Lagrangian finite elements on simplicial meshes (\cref{thm:smatrix,thm:smatrix:fdf,thm:M:W,thm:M12}).
We also consider the more general case of surrogate mass matrices $\tM$.
The main result (\cref{thm:main:result}) shows that $\lambda_{\max}(\tMA)$ is proportional to the maximum ratio between the corresponding diagonal entries of the stiffness and surrogate mass matrices.
Moreover, $\lambda_{\max}(\tMA)$ is bounded by a term depending only on the number of the mesh elements and the alignment of the shape of the mesh elements with the inverse of the diffusion matrix.

\section{Stability condition for explicit time stepping}\label{sec:preliminary}
\enlargethispage{0.5\baselineskip}

Let $\set{\Th}$ be a family of simplicial meshes for $\Omega$ and $V^h$ the Lagrangian $\mathbb{P}_m$ ($m \ge 1$) finite element space associated with $\Th$.
Let $K$ be an arbitrary element of $\Th$, $\hat{K}$ the reference element, and $\omega_i$ the element patch of the $i^{\text{th}}$ vertex (\cref{fig:notation}); element and patch volumes are denoted by $\Abs{K}$ and \mbox{$\Abs{\omega_i} = \sum_{K \in \omega_i} \Abs{K}$}.
For each $K\in \Th$ let $F_K \from \hat{K} \to K$ be an invertible affine mapping and  $F_K'$ its Jacobian matrix which is constant and satisfies $\det(F_K') = \Abs{K}$  (for simplicity, we assume that $\abs{\hat{K}} = 1$).
We further assume that the mesh is fixed for all time steps.
%
\begin{figure}[t]%
   \centering{}%
   \begin{tikzpicture}[scale = 0.75]%
      \tikzstyle{every node}=[font=\small]
      %
      %
      \path ( 0.0,  0.0) coordinate (N0); 
      \path ( 2.0,  0.0) coordinate (N1);  
      \path ( 0.6,  1.8) coordinate (N2);
      \path (-1.5,  1.3) coordinate (N3);
      \path (-1.4, -1.0) coordinate (N4);
      \path ( 0.4, -1.6) coordinate (N5);
      %
      %
      \draw [] (N0) -- (N1) -- (N2) -- cycle;
      \draw [] (N0) -- (N2) -- (N3) -- cycle;
      \draw [fill = gray!12] (N0) -- (N3) -- (N4) -- cycle;
      \draw [] (N0) -- (N4) -- (N5) -- cycle;
      \draw [] (N0) -- (N5) -- (N1) -- cycle;
      %
      %
      \path ( -5.6, -1.2) coordinate (Q1); 
      \path ( -5.6,  1.4) coordinate (Q2);  
      \path ( -3.0, -1.2) coordinate (Q3);
      \path ( -5.6,  0.1) coordinate (Q12);
      \path ( -4.3, -1.2) coordinate (Q13);
      \path ( -4.3,  0.1) coordinate (Q23);
      \draw [fill = gray!12] (Q1) -- (Q2) -- (Q3) -- cycle;
      \filldraw [black] (Q1)  circle (2.0pt);   
      \filldraw [black] (Q2)  circle (2.0pt);   
      \filldraw [black] (Q3)  circle (2.0pt);   
      \filldraw [black] (Q12)  circle (2.0pt);   
      \filldraw [black] (Q13)  circle (2.0pt);   
      \filldraw [black] (Q23)  circle (2.0pt);   
      %
      %
      %
      \path (-4.6,  -0.2) coordinate (Khat);
      \path (-0.9,  0.3) coordinate (K);
      \node [below] at (Khat)  {$\hat{K}$};
      \node [below] at (K)  {$K$};
      \node [above right]  at (N0)  {node $i$};
      \node [above] at (2.5, -1.2)  {patch $\omega_i$};
      \filldraw [black] (N0)  circle (2.0pt);
      %
      %
      \path [->, line width = 1pt] ($(Khat) + (0.3,-0.2)$)
         edge [bend left] node [above] {$F_K(\bxi)$} ($(K) + (-0.3,-0.20)$);
      \path [->, line width = 1pt] ($(K) + (-0.30,-0.50)$)
         edge [bend left] node [above] {$F_K^{-1}(\bx)$} ($(Khat) - (-0.3,0.40)$);
      %
      %
      \path ( 3.1,  0.0) coordinate (N6); 
      \path ( 4.2,  1.7) coordinate (N7);  
      \path ( 5.3,  0.0) coordinate (N8);
      \path ( 4.2, -1.6) coordinate (N9);
      \path ( 4.2,  0.0) coordinate (N10);
      \draw [] (N6) -- (N7) -- (N8);
      \draw [] (N6) -- (N8) -- (N9) -- cycle;
      \node [above]  at (N10)  {node $i$};
      \filldraw [black] (N10)  circle (2.0pt);
   \end{tikzpicture}%
   \caption{Example of the standard quadratic FE reference mesh element $\hat{K}$,
      mapping $F_K$, the corresponding mesh elements $K$,
   nodes and their patches.\label{fig:notation}}%
\end{figure}
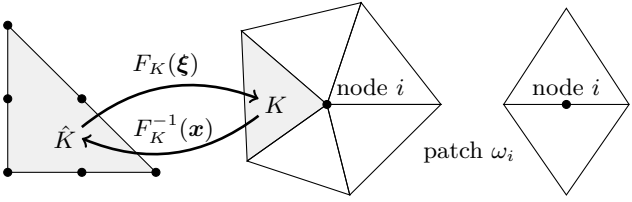

With $V^h_D=V^h\cap H^1_D(\Omega)$, the finite element solution $u^h(t) \in V^h_D$, $t \in \left( 0, T \right]$, is defined by
\begin{equation}
   \int_\Omega \partial_t u^h v^h \dx
   = - \int_\Omega \nabla v^h \cdot \Diff \nabla u^h \dx,
   \qquad \forall v^h \in V^h_D,
   \label{eq:FEM:i}
\end{equation}
subject to the initial condition
\begin{equation}
   \int_\Omega u^h(\bx, 0) v^h \dx
   = \int_\Omega u^0(\bx) v^h \dx,
   \qquad \forall v^h \in V^h_D .
   \label{eq:FEM:ii}
\end{equation}
Let $N_\phi$ be the dimension of the finite element space $V^h_D$ and denote a nodal basis of $V^h_D$ by $\set{\phi_1, \dotsc, \phi_{N_\phi}}$, then $u^h$ can be expressed as
\[
   u^h(\bx,t) = \sum_{j=1}^{N_{\phi}} u^h_j(t) \phi_j (\bx) .
\]
Using $\bU = {(u^h_1,\dotsc, u^h_{N_{\phi}})}^T$, \cref{eq:FEM:i,eq:FEM:ii} can be written into a matrix form
\begin{equation}
   M \bU_t = - A \bU, \quad \bU(0) = \bU_0,
   \label{eq:fem:system}
\end{equation}
where the mass and stiffness matrices $M$ and $A$ are defined by
\[
   M_{ij} = \int_\Omega \phi_i \phi_j \dx
   \quad \text{and} \quad
   A_{ij} = \int_\Omega \nabla \phi_i \cdot \Diff \nabla \phi_j \dx
\]
for all $i, j = 1, \dotsc, N_{\phi}$.
We further assume that surrogate mass matrices $\tM$ considered throughout the paper satisfy
\begin{enumerate}[label=(M\arabic*)]
\item The reference element matrix $\tM_{\hat{K}}$
   is symmetric positive definite.\label{item:M1}
\item The element matrix $\tM_K$ satisfies
   $\tM_K = |K| \tM_{\hat{K}}$.\label{item:M2}
\end{enumerate}
For example, \cref{item:M1,item:M2} are satisfied for any mass lumping by means of numerical quadrature with positive weights.

\begin{lemma}[{\cite[Theorem~3.1]{HuaKamLan13b}}]\label{thm:stability}
For a given explicit RK method with the polynomial stability function $R$ and a symmetric positive definite surrogate matrix $\tM$ that satisfies\footnote{In the following, the less-than-or-equal-to sign for matrices means that the difference between the right-hand side and left-hand side terms is positive semidefinite.} $c_1\tM\le M\le c_2\tM$ for some positive constants $c_1$ and $c_2$, the finite element approximation $u^{h}_n$ at $t_n = n \tau$ satisfies
\[
   \Norm{u^{h}_{n}}_{L^2(\Omega)} 
      \le \sqrt{\frac{c_2}{c_1}} \Norm{u^{h}_{0}}_{L^2(\Omega)}
   \quad \text{and} \quad
   \NormE{u^{h}_{n}} \le \NormE{u^{h}_{0}},
\]
if the time step $\tau$ is chosen such that
\[
   \max_i \Abs{R\left(-\tau\lambda_i\left(\tM^{-1}A\right)\right)} \le 1 .
\]
\end{lemma}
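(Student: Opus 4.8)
The plan is to reduce the statement to the symmetric eigenvalue problem for $\tMA$ and to use that, after the mass-lumping substitution, an explicit RK step is simply the linear map $\bU_{n+1}=R(-\tau\tMA)\bU_n$ with $R$ the given (polynomial) stability function. First I would fix the algebraic setting. Since $\meas_{d-1}\Gamma_D>0$, the bilinear form $a(u,v)=\int_\Omega\nabla v\cdot\Diff\nabla u\dx$ is coercive on $H^1_D(\Omega)$, so $A$ is symmetric positive definite, and $\tM$ is symmetric positive definite by \cref{item:M1,item:M2}. Hence $B:=\tM^{-1/2}A\tM^{-1/2}$ is symmetric positive definite, with an orthonormal eigenbasis and positive eigenvalues which are exactly the $\lambda_i(\tMA)$. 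For a generic $u^h=\sum_j U_j\phi_j$ with coefficient vector $\bU$, the hypothesis $c_1\tM\le M\le c_2\tM$ reads $c_1\,\bU^T\tM\bU\le\Norm{u^h}_{L^2(\Omega)}^2=\bU^T M\bU\le c_2\,\bU^T\tM\bU$, while $\NormE{u^h}^2=\bU^T A\bU$.

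Next I would diagonalize the iteration. The RK method advances $\bU_t=-\tMA\,\bU$, hence $\bU_{n+1}=R(-\tau\tMA)\bU_n$; substituting $\bm{W}_n=\tM^{1/2}\bU_n$ together with $\tMA=\tM^{-1/2}B\,\tM^{1/2}$ gives $\bm{W}_{n+1}=R(-\tau B)\bm{W}_n$, where $R(-\tau B)$ is symmetric and, by the time-step hypothesis, $\Norm{R(-\tau B)}_2=\max_i\Abs{R(-\tau\lambda_i(\tMA))}\le1$.

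The two estimates then follow by choosing the right Euclidean-equivalent norm. For the $L^2$ bound, $\bU_n^T\tM\bU_n=\Norm{\bm{W}_n}_2^2$ is nonincreasing, so $\Norm{u^h_n}_{L^2(\Omega)}^2\le c_2\,\bU_n^T\tM\bU_n\le c_2\,\bU_0^T\tM\bU_0\le(c_2/c_1)\Norm{u^h_0}_{L^2(\Omega)}^2$, and taking square roots gives the claim. For the energy bound, $\NormE{u^h_n}^2=\bU_n^T A\bU_n=\bm{W}_n^T B\bm{W}_n=\Norm{B^{1/2}\bm{W}_n}_2^2$, and since $B^{1/2}$ commutes with the polynomial $R(-\tau B)$ we get $B^{1/2}\bm{W}_{n+1}=R(-\tau B)\,B^{1/2}\bm{W}_n$, so $\NormE{u^h_{n+1}}\le\Norm{R(-\tau B)}_2\,\NormE{u^h_n}\le\NormE{u^h_n}$; iterating yields $\NormE{u^h_n}\le\NormE{u^h_0}$.

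The one point that needs care — and the main obstacle I anticipate — is the bookkeeping of which Euclidean-equivalent norm is actually contracted by the scheme: the iteration is a contraction in $\Norm{\cdot}_2$ in the $\bm{W}$-variable, which corresponds to the $\tM$-weighted (not the true $L^2$) norm of $u^h$, so the spectral-equivalence constants survive in the $L^2$ estimate as the factor $\sqrt{c_2/c_1}$ but cancel exactly in the energy estimate, because there $A$ appears simultaneously in the norm and in the iteration operator. Everything else is routine: symmetry of $B$, the identity $\Norm{R(-\tau B)}_2=\max_i\Abs{R(-\tau\lambda_i)}$ for a real polynomial $R$ applied to the symmetric matrix $B$, and commutativity of functions of $B$.
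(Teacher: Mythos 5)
Your proof is correct and follows essentially the same route as the argument in the cited reference \cite[Theorem~3.1]{HuaKamLan13b} (the paper itself only cites that proof rather than reproducing it): symmetrize via $\tM^{1/2}$, observe that the iteration matrix $R(-\tau B)$ with $B=\tM^{-1/2}A\tM^{-1/2}$ is symmetric with spectral norm $\max_i\abs{R(-\tau\lambda_i(\tM^{-1}A))}\le 1$, and translate the resulting contraction of the $\tM$-norm and the $A$-norm into the $L^2$ and energy estimates, with the spectral-equivalence constants $c_1,c_2$ surviving only in the $L^2$ bound. Your accounting of which norm is contracted and why the constants cancel in the energy estimate is exactly the right point of care.
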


This lemma is proven in~\cite{HuaKamLan13b} for the linear finite element discretization.
However, from the proof one can see that it is valid for any system in the form of \cref{eq:fem:system} with symmetric positive definite matrices $M$ and $A$.
Particularly, it can be used for the system \cref{eq:fem:system} resulting from the $\mathbb{P}_m$ finite element discretization.
In the following, we establish a series of lemmas for bounds on the stiffness and mass matrices $A$ and $\tM$ and then develop bounds for $\lambda_{\max} (\tM^{-1}A)$.

\begin{lemma}\label{thm:smatrix}
Let $\eta$ be the maximal number of basis functions per element.
Then the stiffness matrix $A$ and its diagonal part $A_D$ for $\mathbb{P}_m$ finite elements satisfy
\[
   A \le \eta A_D .
\]
\end{lemma}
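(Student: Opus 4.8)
The plan is to reduce the claim to a purely algebraic fact about a single positive semidefinite matrix via the usual element‑by‑element assembly of the stiffness matrix. First I would write $A = \sum_{K \in \Th} A_K$ and $A_D = \sum_{K \in \Th} (A_K)_D$, where $A_K$ denotes the element stiffness matrix, understood as an $N_\phi \times N_\phi$ matrix that vanishes outside the principal block indexed by the (at most $\eta$) basis functions whose support intersects $K$. Since $A \le \eta A_D$ would follow by summing the elementwise inequalities $A_K \le \eta (A_K)_D$, it suffices to establish the latter for a fixed $K$.

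For the elementwise estimate I would first note that $A_K$ is symmetric positive semidefinite: for any coefficient vector $\bv$ with associated finite element function $v^h = \sum_j v_j \phi_j$ one has $\bv^{T} A_K \bv = \int_K \nabla v^h \cdot \Diff \nabla v^h \dx \ge 0$ because $\Diff$ is positive definite. Let $n_K \le \eta$ be the number of active basis functions on $K$ and let $B \in \mathbb{R}^{n_K \times n_K}$ be the nontrivial block of $A_K$. Positive semidefiniteness of the $2 \times 2$ principal submatrices of $B$ gives $\abs{B_{ij}} \le \sqrt{B_{ii} B_{jj}}$ for all $i,j$, so for every $x \in \mathbb{R}^{n_K}$,
\[
   x^{T} B x
   \le \sum_{i,j} \sqrt{B_{ii}}\,\abs{x_i}\,\sqrt{B_{jj}}\,\abs{x_j}
   = \Bigl( \sum_i \sqrt{B_{ii}}\,\abs{x_i} \Bigr)^{\!2}
   \le n_K \sum_i B_{ii}\, x_i^{2},
\]
where the final step is the Cauchy--Schwarz inequality $\bigl(\sum_i a_i\bigr)^{2} \le n_K \sum_i a_i^{2}$. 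Hence $B \le n_K B_D \le \eta B_D$ (using that $B_D$ has nonnegative diagonal entries), which is exactly $A_K \le \eta (A_K)_D$.

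Summing over all $K \in \Th$ then gives $A = \sum_K A_K \le \eta \sum_K (A_K)_D = \eta A_D$, as claimed. The only point requiring care is the bookkeeping in the assembly step: one must check that the inequality obtained for the small active block is correctly read as a positive‑semidefinite inequality between the full $N_\phi \times N_\phi$ matrices $A_K$ and $\eta (A_K)_D$ --- both of which vanish outside that block --- so that the term‑by‑term summation over elements is legitimate. The analytic content, the Cauchy--Schwarz bound displayed above, I expect to be entirely routine.
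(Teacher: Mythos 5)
Your proof is correct and is in substance the paper's own argument: both reduce the claim to bounding the off-diagonal entries of a positive semidefinite Gram form by its diagonal entries --- you via $\abs{B_{ij}}\le\sqrt{B_{ii}B_{jj}}$ followed by Cauchy--Schwarz on the element block, the paper via $\bu^T S \bv + \bv^T S \bu \le \bu^T S \bu + \bv^T S \bv$ applied to $S=\Diff$ --- with the factor $\eta$ in both cases coming from counting the active basis functions per element. Your explicit element-by-element assembly is just a more careful bookkeeping of the step the paper performs directly on the global quadratic form.
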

\begin{proof}
Notice that for any positive semi-definite matrix $S$ and any vectors $\bu$ and $\bv$ we have
\[
   \bu^T S \bv + \bv^T S \bu \le \bu^T S \bu + \bv^T S \bv.
\]
From this,
\begin{align*}
   \bu^T A \bu
   &= \sum_{i,j} \int_\Omega {\left(u_i \nabla \phi_i\right)}^T \Diff
      \left(u_j \nabla\phi_j\right) \dx
   \\
   &\le \sum_i \eta \int_\Omega {\left(u_i \nabla \phi_i\right)}^T \Diff
      \left(u_i \nabla\phi_i\right) \dx
   \\
   &= \eta \sum_i u_i^2 \int_\Omega \nabla \phi_i^T \Diff \nabla\phi_i \dx
   \\
   &= \bu^T \eta A_D \bu.
   \qedhere{}
\end{align*}
\end{proof}

\begin{lemma}\label{thm:smatrix:fdf}
Let $\hat\phi_i$ be the basis functions on the reference element that correspond to $\phi_i$ and
\[
   C_{H^1}  = \max_i \abs{\hat\phi_i}_{H^1(\hat{K})}^2.
\]
Then the diagonal entries $A_{ii}$ of the stiffness matrix $A$
are bounded by
\[
   A_{ii} 
   \le  C_{H^1} \sum\limits_{K \in \omega_i} \Abs{K}
      \max_{\bx \in K} \Norm{\FDF}_2.
\]
\end{lemma}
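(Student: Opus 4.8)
The plan is to evaluate $A_{ii}$ directly from its definition, exploit that the nodal basis function $\phi_i$ is supported on the patch $\omega_i$ to localize the integral element by element, transform each element integral to the reference element $\hat K$, and then estimate the resulting quadratic form by the spectral norm of $\FDF$ times the $H^1$-seminorm of the corresponding reference basis function.

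First I would write
\[
   A_{ii}
   = \int_\Omega \nabla\phi_i \cdot \Diff \nabla\phi_i \dx
   = \sum_{K \in \omega_i} \int_K \nabla\phi_i \cdot \Diff \nabla\phi_i \dx .
\]
On a fixed $K \in \omega_i$, using the affine map $\bx = F_K(\bxi)$ with $\det(F_K') = \Abs{K}$ and $\abs{\hat K} = 1$, together with the chain rule $\nabla_{\bx}\phi_i = {(F_K')}^{-T}\nabla_{\bxi}\hat\phi_i$ and $\dx = \Abs{K}\dxi$, the element integral becomes
\[
   \int_K \nabla\phi_i \cdot \Diff \nabla\phi_i \dx
   = \Abs{K} \int_{\hat K}
      \nabla\hat\phi_i^T \, {(F_K')}^{-1}\Diff{(F_K')}^{-T} \, \nabla\hat\phi_i \dxi ,
\]
where $\Diff$ inside the integral is understood as $\Diff(F_K(\bxi))$ with $F_K(\bxi) \in K$. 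Next I would bound the integrand pointwise, using that $\FDF$ is symmetric (indeed symmetric positive definite), by $\Norm{\FDF}_2 \, \abs{\nabla\hat\phi_i}^2$, pull the $\bx$-dependent factor out of the integral as $\max_{\bx \in K}\Norm{\FDF}_2$, and recognize that the remaining integral $\int_{\hat K}\abs{\nabla\hat\phi_i}^2\dxi$ is exactly $\abs{\hat\phi_i}_{H^1(\hat K)}^2 \le C_{H^1}$. Summing over $K \in \omega_i$ then yields the claimed estimate.

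I do not expect a genuine obstacle here; the argument is a textbook change of variables followed by a spectral-norm bound. The only point that deserves care is that "the reference basis function $\hat\phi_i$ corresponding to $\phi_i$" may in fact be a different shape function on each $K \in \omega_i$ (a vertex shape function on one element, an edge shape function on another, and so on for $m \ge 2$); this is precisely why $C_{H^1}$ is defined as the maximum of $\abs{\hat\phi_i}_{H^1(\hat K)}^2$ over \emph{all} reference basis functions, which makes the per-element bound, and hence the final summed estimate, uniform over the patch.
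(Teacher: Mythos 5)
Your proof is correct and follows essentially the same route as the paper's: decompose $A_{ii}$ over the patch $\omega_i$, change variables to $\hat K$ via the affine map, bound the quadratic form by $\max_{\bx\in K}\Norm{\FDF}_2$ times $\abs{\hat\phi_i}_{H^1(\hat K)}^2\le C_{H^1}$, and sum. Your closing remark about $\hat\phi_i$ being a different reference shape function on different elements of the patch is a valid observation and exactly the reason $C_{H^1}$ is taken as a maximum over all reference basis functions.
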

\begin{proof}
From the definition of the stiffness matrix we have
\[
   A_{ii}  
   = \int_\Omega \nabla\phi_i^T \Diff \nabla\phi_i \dx
   = \sum_{K \in \omega_i} \int_K \nabla \phi_i^T \Diff \nabla \phi_i \dx
   .
\]
Let $\hat{\nabla} = {\partial}/{\partial \bxi }$ be the gradient operator in $\hat{K}$.
The chain rule yields $\nabla = {(F_K')}^{-T} \hat{\nabla}$ and together with $\det(F_K') = \Abs{K}$ we obtain
\begin{align*}
   A_{ii} 
   &= \sum_{K \in \omega_i} \Abs{K} \int_{\hat{K}}
      \hat{\nabla} \hat{\phi}_i^T \FDF \hat{\nabla} \hat{\phi}_i \dxi\\
   &\le \sum_{K \in \omega_i} \Abs{K} \; 
      \norm{\hat{\nabla} \hat{\phi}_i}_{L^2(\hat{K})}^2 
      \max_{\bx \in K} \Norm{\FDF}_2\\
   &\le C_{H^1} \sum_{K \in \omega_i}
      \Abs{K} \max_{\bx \in K} \Norm{\FDF}_2
   .
   \qedhere{}
\end{align*}
\end{proof}

\begin{lemma}\label{thm:M:W}
Let $\tM$ be a surrogate $\mathbb{P}_m$ finite element mass matrix, $\hat\Lambda_{\tM}$ and $\hat\lambda_{\tM}$ be the largest and smallest eigenvalues of the surrogate mass matrix $\tM_{\hat{K}}$ on the reference element  and 
\[
   \mathcal{W} = \diag\left(\Abs{\omega_1},\dotsc,\Abs{\omega_{N_\phi}}\right).
\]
Then 
\begin{equation}
   \hat\lambda_{\tM} \mathcal{W} 
   \le \tM \le
   \hat\Lambda_{\tM} \mathcal{W} 
   .
   \label{eq:M:W}
\end{equation}
\end{lemma}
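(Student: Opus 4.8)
The plan is to pass to the quadratic form $\bu^T \tM \bu$ and exploit the element-by-element assembly structure of the surrogate mass matrix together with the eigenvalue bounds on its reference element matrix. First I would write $\tM$ as a sum of element contributions: for any coefficient vector $\bu = {(u_1,\dotsc,u_{N_\phi})}^T$,
\[
   \bu^T \tM \bu = \sum_{K \in \Th} \bu_K^T \tM_K \bu_K ,
\]
where $\bu_K$ denotes the subvector of $\bu$ collecting the local degrees of freedom of $K$ (entries attached to nodes on $\Gamma_D$ are simply absent, which only drops nonnegative terms and does not affect the inequalities below). By \cref{item:M2} we have $\tM_K = \Abs{K}\,\tM_{\hat{K}}$, and by \cref{item:M1} the reference matrix $\tM_{\hat{K}}$ is symmetric positive definite, so its Rayleigh quotient is squeezed between its extreme eigenvalues,
\[
   \hat\lambda_{\tM}\,\norm{\bu_K}_2^2
   \le \bu_K^T \tM_{\hat{K}} \bu_K
   \le \hat\Lambda_{\tM}\,\norm{\bu_K}_2^2 .
\]

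Next I would multiply by $\Abs{K}$ and sum over all elements. Both the lower and the upper bound reduce, up to the scalar factors $\hat\lambda_{\tM}$ and $\hat\Lambda_{\tM}$, to the same quantity, which I reorganise by nodes instead of by elements,
\[
   \sum_{K \in \Th} \Abs{K}\,\norm{\bu_K}_2^2
   = \sum_{i=1}^{N_\phi} u_i^2 \sum_{K \in \omega_i} \Abs{K}
   = \sum_{i=1}^{N_\phi} \Abs{\omega_i}\, u_i^2
   = \bu^T \mathcal{W} \bu ,
\]
using that node $i$ contributes to element $K$ exactly when $K \in \omega_i$. Combining the last three displays yields $\hat\lambda_{\tM}\,\bu^T \mathcal{W} \bu \le \bu^T \tM \bu \le \hat\Lambda_{\tM}\,\bu^T \mathcal{W}\bu$ for every $\bu$, which is precisely \cref{eq:M:W}.

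I do not anticipate a genuine obstacle, since the argument is essentially bookkeeping; the only points that need a little care are the local-to-global correspondence for Dirichlet nodes (so that the assembled scalar identities really sum to the patch volumes $\Abs{\omega_i}$ of the retained nodes, with no spurious contributions) and keeping the two one-sided estimates oriented with the correct extreme eigenvalue of $\tM_{\hat{K}}$.
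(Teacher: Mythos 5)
Your proposal is correct and is essentially identical to the paper's proof: both decompose $\bu^T \tM \bu$ element-wise, use \cref{item:M2} to factor out $\Abs{K}$, bound the reference-element Rayleigh quotient by $\hat\lambda_{\tM}$ and $\hat\Lambda_{\tM}$, and regroup the sum by nodes to obtain $\bu^T \mathcal{W} \bu$. Your extra care about Dirichlet nodes is a harmless refinement the paper leaves implicit.
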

\begin{proof}
We have
\begin{align*}
   \bu^T \tM \bu 
      &= \sum_K \bu_K^T \tM_K \bu_K 
       = \sum_K \Abs{K} \bu_K^T \tM_{\hat{K}} \bu_K
      \\
      &\le  \sum_K \Abs{K} \hat\Lambda_{\tM} \Norm{\bu_K}_{2}^2
      = \hat\Lambda_{\tM} \sum_i u_i^2 \sum_{K\in\omega_i} \Abs{K}
      \\
      &= \hat\Lambda_{\tM} \sum_i u_i^2 \Abs{\omega_i}
       = \hat\Lambda_{\tM} \bu^T \mathcal{W} \bu.
\end{align*}
The lower bound can be obtained similarly.
\end{proof}

\begin{lemma}\label{thm:M12}
Let $\tM_1$ and $\tM_2$ be two surrogate mass matrices for $\mathbb{P}_m$ finite elements.
Then
\[
   \frac{\hat\lambda_{\tM_1}}{\hat\Lambda_{\tM_2}}
      \tM_2
      \le \tM_1 \le 
   \frac{\hat\Lambda_{\tM_1}}{\hat\lambda_{\tM_2}}
      \tM_2.
\]
\end{lemma}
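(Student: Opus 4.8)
The plan is to use \cref{thm:M:W} as a common reference point: both surrogate mass matrices are sandwiched between scalar multiples of the same diagonal matrix $\mathcal{W}=\diag(\Abs{\omega_1},\dotsc,\Abs{\omega_{N_\phi}})$, so a bound of $\tM_1$ against $\tM_2$ follows by eliminating $\mathcal{W}$ from the two two-sided estimates. First I would apply \cref{eq:M:W} to $\tM_1$, giving $\hat\lambda_{\tM_1}\mathcal{W}\le\tM_1\le\hat\Lambda_{\tM_1}\mathcal{W}$, and separately to $\tM_2$, giving $\hat\lambda_{\tM_2}\mathcal{W}\le\tM_2\le\hat\Lambda_{\tM_2}\mathcal{W}$.

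Next I would observe that $\mathcal{W}$ is diagonal with strictly positive entries (each patch volume $\Abs{\omega_i}$ is positive), hence invertible, and that $\hat\lambda_{\tM_2},\hat\Lambda_{\tM_2}>0$ by \cref{item:M1}. Therefore the lower and upper bounds on $\tM_2$ can be rearranged into two-sided control of $\mathcal{W}$ itself:
\[
   \frac{1}{\hat\Lambda_{\tM_2}}\,\tM_2 \;\le\; \mathcal{W} \;\le\; \frac{1}{\hat\lambda_{\tM_2}}\,\tM_2 .
\]
Here I would note that multiplying a positive semidefinite inequality by a positive scalar preserves it, so these manipulations are legitimate in the Loewner order.

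Finally I would chain the estimates: $\tM_1\le\hat\Lambda_{\tM_1}\mathcal{W}\le\frac{\hat\Lambda_{\tM_1}}{\hat\lambda_{\tM_2}}\tM_2$ for the upper bound, and $\tM_1\ge\hat\lambda_{\tM_1}\mathcal{W}\ge\frac{\hat\lambda_{\tM_1}}{\hat\Lambda_{\tM_2}}\tM_2$ for the lower bound, which is exactly the claimed sandwich. There is no real obstacle here — the statement is a one-line corollary of \cref{thm:M:W}; the only point requiring a word of care is the transitivity of the Loewner order combined with scaling by the positive constants $\hat\lambda_{\tM_2}^{-1}$ and $\hat\Lambda_{\tM_2}^{-1}$, which is standard.
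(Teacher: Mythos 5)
Your proposal is correct and is exactly the argument the paper intends: its proof is the one-liner ``apply \cref{eq:M:W} to $\tM_1$ and $\tM_2$,'' and your chaining through $\mathcal{W}$ simply spells out the details. Nothing is missing.
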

\begin{proof}
Use \cref{thm:M:W} by applying \cref{eq:M:W} to $\tM_1$ and $\tM_2$.
\end{proof}

\begin{corollary}\label{thm:stability:M}
Let $\kappa(M_{\hat{K}})$ and $\kappa(\tM_{\hat{K}})$ be the condition numbers of the full and the surrogate reference element mass matrices.
Under the assumptions of \cref{thm:stability} we have
\[
  \Norm{u^{h}_{n}}_{L^2(\Omega)}
  \le \sqrt{\kappa(M_{\hat{K}})\kappa(\tM_{\hat{K}})}
   \Norm{u^{h}_{0}}_{L^2(\Omega)}
\]
and
\[
  \NormE{u^{h}_{n}} \le \NormE{u^{h}_{0}}.
\]
\end{corollary}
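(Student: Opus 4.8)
\emph{Proof plan.}
The plan is to reduce the statement to \cref{thm:stability} by exhibiting explicit admissible constants $c_1,c_2$ in the spectral equivalence $c_1\tM\le M\le c_2\tM$ whose ratio equals $\kappa(M_{\hat K})\kappa(\tM_{\hat K})$. First I would observe that the full mass matrix $M$ is itself a surrogate mass matrix in the sense of \cref{item:M1,item:M2}: the affine change of variables $\bx = F_K(\bxi)$ together with $\det(F_K')=\Abs{K}$ and $\abs{\hat K}=1$ gives $M_K = \int_K \phi_i\phi_j\dx = \Abs{K}\, M_{\hat K}$ with $M_{\hat K} = \int_{\hat K}\hat\phi_i\hat\phi_j\dxi$ symmetric positive definite. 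Hence \cref{thm:M:W,thm:M12} may be applied with $M$ in place of $\tM$; denote by $\hat\Lambda_M$ and $\hat\lambda_M$ the largest and smallest eigenvalues of $M_{\hat K}$.

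Next I would invoke \cref{thm:M12} with $\tM_1=M$ and $\tM_2=\tM$, which yields
\[
   \frac{\hat\lambda_M}{\hat\Lambda_{\tM}}\,\tM \le M \le \frac{\hat\Lambda_M}{\hat\lambda_{\tM}}\,\tM,
\]
so that $c_1 = \hat\lambda_M/\hat\Lambda_{\tM}$ and $c_2 = \hat\Lambda_M/\hat\lambda_{\tM}$ are positive constants satisfying the hypothesis $c_1\tM\le M\le c_2\tM$ of \cref{thm:stability}. Since $M_{\hat K}$ and $\tM_{\hat K}$ are symmetric positive definite, their spectral condition numbers are $\kappa(M_{\hat K}) = \hat\Lambda_M/\hat\lambda_M$ and $\kappa(\tM_{\hat K}) = \hat\Lambda_{\tM}/\hat\lambda_{\tM}$, whence
\[
   \frac{c_2}{c_1}
   = \frac{\hat\Lambda_M/\hat\lambda_{\tM}}{\hat\lambda_M/\hat\Lambda_{\tM}}
   = \frac{\hat\Lambda_M}{\hat\lambda_M}\cdot\frac{\hat\Lambda_{\tM}}{\hat\lambda_{\tM}}
   = \kappa(M_{\hat K})\,\kappa(\tM_{\hat K}).
\]

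Finally, since the time-step restriction $\max_i \Abs{R(-\tau\lambda_i(\tM^{-1}A))}\le 1$ does not involve $c_1$ or $c_2$, applying \cref{thm:stability} with the constants $c_1,c_2$ just identified gives $\Norm{u^h_n}_{L^2(\Omega)} \le \sqrt{c_2/c_1}\,\Norm{u^h_0}_{L^2(\Omega)} = \sqrt{\kappa(M_{\hat K})\kappa(\tM_{\hat K})}\,\Norm{u^h_0}_{L^2(\Omega)}$, while the energy estimate $\NormE{u^h_n}\le\NormE{u^h_0}$ is inherited verbatim from \cref{thm:stability} (it carries no constant). I do not expect a genuine obstacle here; the only point deserving a moment's care is verifying that $M$ legitimately satisfies \cref{item:M1,item:M2}, so that \cref{thm:M12} may be invoked with $\tM_1=M$, and that the spectral condition numbers of the reference element matrices are exactly the eigenvalue ratios appearing above.
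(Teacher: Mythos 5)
Your proposal is correct and follows exactly the paper's own (very terse) proof: apply \cref{thm:M12} with $\tM_1 = M$ and $\tM_2 = \tM$ to obtain $c_1\tM \le M \le c_2\tM$ with $c_2/c_1 = \kappa(M_{\hat K})\kappa(\tM_{\hat K})$, then invoke \cref{thm:stability}. Your additional check that $M$ itself satisfies \cref{item:M1,item:M2} is a worthwhile detail the paper leaves implicit.
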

\begin{proof}
Use $\tM_1 = M$ and $\tM_2 = \tM$ in \cref{thm:M12} and apply \cref{thm:stability}.
\end{proof}

\begin{corollary}\label{thm:M:D}
The surrogate mass matrix $\tM$ for $\mathbb{P}_m$ finite elements and its diagonal part $\tM_D$  satisfy
\[
   \frac{1}{\kappa(\tM_{\hat{K}})}
      \tM_D
      \le \tM \le 
      \kappa(\tM_{\hat{K}})
      \tM_D.
\]
\end{corollary}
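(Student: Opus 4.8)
The plan is to recognize the diagonal part $\tM_D$ as itself a surrogate mass matrix in the sense of \ref{item:M1}--\ref{item:M2}, and then reduce the claim to \cref{thm:M12}. Concretely, for Lagrangian elements each global basis function $\phi_i$ corresponds, on every element $K$ of its patch $\omega_i$, to a single local basis function, so that $(\tM)_{ii} = \sum_{K\in\omega_i} (\tM_K)_{\ell\ell} = \sum_{K\in\omega_i} \Abs{K}\,(\tM_{\hat{K}})_{\ell\ell}$, where $\ell = \ell(i,K)$ is the local index. Hence $\tM_D$ is precisely the matrix assembled from the element matrices $\Abs{K}\diag(\tM_{\hat{K}})$; its reference element matrix is $\diag(\tM_{\hat{K}})$, which is symmetric positive definite because the diagonal entries of the SPD matrix $\tM_{\hat{K}}$ are positive. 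Thus $\tM_D$ satisfies \ref{item:M1}--\ref{item:M2} and \cref{thm:M12} applies.

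Next I would apply \cref{thm:M12} with $\tM_1 = \tM$ and $\tM_2 = \tM_D$, giving
\[
   \frac{\hat\lambda_{\tM}}{\hat\Lambda_{\tM_D}}\,\tM_D
   \;\le\; \tM \;\le\;
   \frac{\hat\Lambda_{\tM}}{\hat\lambda_{\tM_D}}\,\tM_D ,
\]
where $\hat\lambda_{\tM_D}$ and $\hat\Lambda_{\tM_D}$ are the extreme eigenvalues of $\diag(\tM_{\hat{K}})$, i.e.\ $\hat\lambda_{\tM_D} = \min_i (\tM_{\hat{K}})_{ii}$ and $\hat\Lambda_{\tM_D} = \max_i (\tM_{\hat{K}})_{ii}$. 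Since the diagonal entries of an SPD matrix lie between its smallest and largest eigenvalues, one has $\hat\lambda_{\tM} \le (\tM_{\hat{K}})_{ii} \le \hat\Lambda_{\tM}$ for all $i$, whence $\hat\lambda_{\tM} \le \hat\lambda_{\tM_D}$ and $\hat\Lambda_{\tM_D} \le \hat\Lambda_{\tM}$. Substituting these into the two coefficients above yields
\[
   \frac{\hat\lambda_{\tM}}{\hat\Lambda_{\tM_D}} \ge \frac{\hat\lambda_{\tM}}{\hat\Lambda_{\tM}} = \frac{1}{\kappa(\tM_{\hat{K}})},
   \qquad
   \frac{\hat\Lambda_{\tM}}{\hat\lambda_{\tM_D}} \le \frac{\hat\Lambda_{\tM}}{\hat\lambda_{\tM}} = \kappa(\tM_{\hat{K}}),
\]
and the stated two-sided bound follows.

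The only nonroutine point is the first step: verifying that the assembled diagonal part $\tM_D$ genuinely fits the surrogate-mass-matrix framework, which rests on the Lagrangian structure (one local degree of freedom per global node on each incident element) and on the element matrices scaling as $\Abs{K}\tM_{\hat{K}}$ with a fixed reference element. Once that identification is made, everything else is a direct application of \cref{thm:M12} together with the elementary eigenvalue/diagonal comparison, so I expect no real obstacle beyond stating that observation cleanly.
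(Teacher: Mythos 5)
Your proof is correct and follows essentially the same route as the paper: both reduce the claim to \cref{thm:M12} with $\tM_1 = \tM$ and $\tM_2 = \tM_D$. The paper gets there more directly by applying \cref{eq:M:W} to the canonical basis vectors to obtain $\hat\lambda_{\tM}\mathcal{W} \le \tM_D \le \hat\Lambda_{\tM}\mathcal{W}$, whereas you first verify that $\tM_D$ is itself a surrogate mass matrix with reference matrix $\diag(\tM_{\hat{K}})$ and then relax its eigenvalues to those of $\tM_{\hat{K}}$ --- a slightly longer path that in fact yields a marginally sharper intermediate bound.
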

\begin{proof}
Using \cref{eq:M:W} with the canonical basis vector $\be_i$ implies 
\[
   \hat\lambda_{\tM} \mathcal{W}_{ii} \le \tM_{ii}
   \le \hat\Lambda_{\tM} \mathcal{W}_{ii},
\]
which gives
\[
   u_i \hat\lambda_{\tM} \mathcal{W}_{ii} u_i 
   \le u_i \tM_{ii} u_i \le u_i \hat\Lambda_{\tM} \mathcal{W} u_i
   \quad \text{for any $u_i$}.
\]
Since $\tM_D$ and $\mathcal{W}$ are diagonal matrices, this leads to
\begin{equation}
   \hat\lambda_{\tM} \mathcal{W} \le \tM_D \le \hat\Lambda_{\tM} \mathcal{W}
   .
   \label{eq:M:D}
\end{equation}
The statement now follows from \cref{thm:M12} with $\tM_1 = \tM$ and $\tM_2 = \tM_D$.
\end{proof}

Having obtained the preliminary bounds on the stiffness and mass matrices $A$ and $\tM$, we can now give the estimate for the largest eigenvalue of the system matrix $\tMA$ for $\mathbb{P}_m$ finite elements.

\begin{theorem}\label{thm:main:result}
The eigenvalues of $\tM^{-1}A$
are real and positive and the largest eigenvalue is bounded by
\begin{equation}
  \max_i \frac{A_{ii}}{\tM_{ii}}
  \le \lambda_{\max} \bigl( \tMA \bigr)
   \le \eta \, \kappa(\tM_{\hat{K}}) \max_i \frac{A_{ii}}{\tM_{ii}},
\label{eq:lmax}
\end{equation}
where $\eta$ is the maximal number of basis functions per element.
Further,
\begin{align}
   &\lambda_{\max} \bigl( \tM^{-1}A \bigr)
   \le
   \eta \frac{C_{H^1}}{\hat\lambda_{\tM}}
   \notag{}\\
   &\hspace{2.3em} \times \max_i \left\{ \sum\limits_{K \in \omega_i}
      \frac{\Abs{K}}{\Abs{\omega_i}}
      \max_{\bx \in K} \Norm{\FDF}_2 \right\}.
   \label{eq:main:result:geo}
\end{align}
\end{theorem}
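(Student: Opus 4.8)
The plan is to assemble the preliminary lemmas in the right order while being careful about which mass-matrix estimate is used at each stage. First I would note that, since $\tM$ is symmetric positive definite, it has a symmetric positive definite square root and $\tM^{-1}A$ is similar to the symmetric matrix $\tM^{-1/2}A\,\tM^{-1/2}$; moreover $A$ is symmetric positive definite because $\Diff$ is uniformly positive definite and $\meas_{d-1}\Gamma_D>0$, so the bilinear form behind $A$ is coercive on $V^h_D$. Hence the eigenvalues of $\tM^{-1}A$ are real and positive and admit the Rayleigh-quotient characterization
\[
   \lambda_{\max}\bigl(\tM^{-1}A\bigr)
   =\max_{\bu\neq 0}\frac{\bu^T A\,\bu}{\bu^T\tM\,\bu}.
\]
Testing with $\bu=\be_i$ gives $A_{ii}/\tM_{ii}\le\lambda_{\max}(\tM^{-1}A)$ for each $i$, and taking the maximum over $i$ yields the lower bound in \cref{eq:lmax}.

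For the upper bound in \cref{eq:lmax} I would chain \cref{thm:smatrix} and \cref{thm:M:D}: from $\bu^T A\,\bu\le\eta\,\bu^T A_D\,\bu$ and $\bu^T\tM_D\,\bu\le\kappa(\tM_{\hat{K}})\,\bu^T\tM\,\bu$, the Rayleigh quotient above is at most $\eta\,\kappa(\tM_{\hat{K}})\max_{\bu\neq 0}(\bu^T A_D\,\bu)/(\bu^T\tM_D\,\bu)$; since $A_D$ and $\tM_D$ are diagonal with nonnegative entries, this last quotient equals $\max_i A_{ii}/\tM_{ii}$, completing \cref{eq:lmax}.

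For the geometric bound \cref{eq:main:result:geo} I would run the same argument but bound $\tM$ from below using \cref{thm:M:W} rather than \cref{thm:M:D}, so as not to collect the extra factor $\kappa(\tM_{\hat{K}})$: combining $\bu^T A\,\bu\le\eta\,\bu^T A_D\,\bu$ with $\bu^T\tM\,\bu\ge\hat\lambda_{\tM}\,\bu^T\mathcal{W}\,\bu$ from \cref{eq:M:W} gives $\lambda_{\max}(\tM^{-1}A)\le(\eta/\hat\lambda_{\tM})\max_i A_{ii}/\Abs{\omega_i}$, again because $A_D$ and $\mathcal{W}$ are diagonal. Inserting $A_{ii}\le C_{H^1}\sum_{K\in\omega_i}\Abs{K}\max_{\bx\in K}\Norm{\FDF}_2$ from \cref{thm:smatrix:fdf} and dividing by $\Abs{\omega_i}$ yields \cref{eq:main:result:geo}. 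I do not expect a real obstacle; the only points needing care are choosing the appropriate mass-matrix lemma at each step so the constants come out exactly as stated, and justifying that the Rayleigh quotient of a diagonal matrix pencil equals the largest entrywise ratio.
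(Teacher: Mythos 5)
Your proposal is correct and follows essentially the same route as the paper: canonical basis vectors in the Rayleigh quotient for the lower bound, \cref{thm:smatrix} combined with \cref{thm:M:D} for the upper bound in \cref{eq:lmax}, and \cref{thm:smatrix}, \cref{thm:M:W}, and \cref{thm:smatrix:fdf} for the geometric bound \cref{eq:main:result:geo}. The extra details you supply (similarity to $\tM^{-1/2}A\tM^{-1/2}$ and the diagonal-pencil identity) are correct justifications of steps the paper leaves implicit.
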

\begin{proof}
Since $\tM$ and $A$ are symmetric positive definite, the eigenvalues of $\tM^{-1}A$ are real and positive.
The lower bound in \cref{eq:lmax} is obtained by using the canonical basis vectors $\be_i$ and the upper bound follows from \cref{thm:smatrix,thm:M:D},
\begin{align*}
   \lambda_{\max}(\tMA)
   &= \max_{\bv \neq 0} \frac{ \bv^T A \bv}{\bv^T \tM \bv}
   \le \max_{\bv \neq 0}\frac{ \bv^T \eta A_D \bv}
      {\bv^T \frac{1}{\kappa(\tM_{\hat{K}})} \tM_D \bv}
   \\
   &= \eta \, \kappa(\tM_{\hat{K}}) \max_i \frac{A_{ii}}{\tM_{ii}}.
\end{align*}
The geometric bound \cref{eq:main:result:geo} is a direct consequence of \cref{thm:smatrix,thm:smatrix:fdf,thm:M:W},
\begin{align*}
   &\hspace{0.3em}
   \lambda_{\max}(\tMA) = \max_{\bv \neq 0} \frac{ \bv^T A \bv}{\bv^T \tM \bv}
   \le \max_{\bv \neq 0} \frac{ \bv^T \eta A_D \bv}
      {\bv^T \hat\lambda_{\tM} \mathcal{W} \bv} \\
   &\hspace{1.75em}
   \le \eta \frac{C_{H^1}}{\hat\lambda_M}
      \max_i \left\{ \sum\limits_{K \in \omega_i}
      \frac{\Abs{K}}{\Abs{\omega_i}}
      \max_{\bx \in K} \Norm{\FDF}_2 \right\}.
\end{align*}
\end{proof}

\Cref{thm:main:result} can be used in combination with \cref{thm:stability} or \cref{thm:stability:M} to derive the stability condition of a given explicit Runge-Kutta scheme, as shown in the next example.

\begin{example}[Explicit Euler method]\label{sec:numerics}\label{ex:explicit:SRK}
The stability region of the explicit Euler method includes the real interval $[-2, 0]$.
\Cref{thm:stability} implies that the method is stable if
\[
   -2 \le -\tau\lambda_i(\tM^{-1}A) \le 0,
   \qquad i = 1, \dotsc, N_\phi.
\]
Using \cref{thm:main:result}, we conclude that the method is stable if the time step $\tau$ satisfies
\[
   \tau
   \le \frac{2}{\eta \, \kappa(\tM_{\hat{K}})} \min_i \frac{\tM_{ii}}{A_{ii}}
\]
or, in terms of mesh geometry,
\[
   \tau
   \le
   \frac{2 \hat\lambda_{\tM_{\hat{K}}}}{\eta \,  C_{H^1}}
   \min_i {\left( \sum\limits_{K \in \omega_i}
      \frac{\Abs{K}}{\Abs{\omega_i}}
      \max_{\bx \in K} \Norm{\FDF}_2 \right)}^{-1}
   .
\]
\end{example}

\begin{remark}
   \Cref{thm:smatrix,thm:smatrix:fdf,thm:M:D} are very general and valid for any mesh, any $\Diff$ and any surrogate mass matrix $\tM$ satisfying \cref{item:M1,item:M2}.
More accurate bounds can be obtained if more information is available about the mesh or the stiffness and mass matrices.

For example, if $A$ is an M-matrix, then the Gershgorin circle theorem yields $\lambda_{\max}(A) \le 2 \max_i A_{ii}$~\cite[Remark~2.2]{HuaKamLan13b} and therefore $\eta$ in \cref{thm:main:result} can be replaced by $2$.

If $\tM = M$ (no mass lumping), then, instead of estimating $M_D$ through~\cref{eq:M:D}, a direct calculation for the standard $\mathbb{P}_m$ finite elements yields
\[
   M_D = C_{L^2} \mathcal{W},
   \qquad 
   C_{L^2} = \diag \left(\norm{\hat\phi_1}_{L^2}^2, 
      \dotsc, \norm{\hat\phi_{N_\phi}}_{L^2}^2 \right),
\]
and
\[
   \hat\lambda_M C_{L^2}^{-1} M_D 
      \le M \le \hat\Lambda_M C_{L^2}^{-1} M_D,
\]
resulting in a slighly more accurate bound in \cref{thm:M:D}.
For simplicity, in \cref{thm:smatrix:fdf} we used $C_{H^1}  = \max_i \abs{\hat\phi_i}_{H^1(\hat{K})}^2$.
A slightly more accurate bound can be derived if we use
\[
   C_{H^1}  
   = \diag\left(\abs{\hat\phi_1}_{H^1(\hat{K})}^2,
      \dotsc, \abs{\hat\phi_{N_\phi}}_{H^1(\hat{K})}^2\right).
\]
\end{remark}

\section{Summary and conclusion}\label{sec:conclusion}

\Cref{thm:main:result} states that the largest eigenvalue of the system matrix and, thus, the largest permissible time step can be bounded by a term depending only on the number of mesh elements and the alignment of the mesh with the diffusion matrix.

The bound in terms of matrix entries is tight within a small factor which depends only on the dimension and the choice of the reference element and basis functions but is independent of the mesh or the coefficients of the IBVP.\@
This is valid for any Lagrangian $\mathbb{P}_m$ finite elements with $m \ge 1$.

A similar result is obtained by Zhu and Du~\cite[Theorem~3.1]{ZhuDu14}.
In our notation, it can be written as
\begin{multline}
   \lambda_{\max}(\MA) \\ \lesssim
      \max\limits_K \Big\{ 
         \max_{\bx\in K} \lambda_{\max}(\Diff) \Norm{\FF}_2 
         \Big\}
         .
   \label{eq:zhudu}
\end{multline}
The significant difference between the new bound \cref{eq:main:result:geo} and the bound \cref{eq:zhudu} is the factor which represents the interplay between the mesh geometry and the diffusion matrix,
\[
   \max_{\bx \in K} \Norm{\FDF}_2
\]
vs.\
\[
   \max_{\bx \in K} \lambda_{\max}(\Diff) \Norm{\FF}_2
   .
\]
For isotropic $\Diff$ or isotropic meshes both terms are comparable.
However, the former is smaller than the latter in general.
In particular, if both $\Diff$ and $K$ are anisotropic, then the difference between \cref{eq:main:result:geo,eq:zhudu} can be very significant (see~\cite[Sect.~4.4]{HuaKamLan13b} for a numerical example in case of $\mathbb{P}_1$ finite elements).
In this sense, \cref{thm:main:result} can be seen either as a generalization of~\cite{HuaKamLan13b} to $\mathbb{P}_m$~($m \ge 2$) finite elements or as a more accurate version of~\cite{ZhuDu14} for anisotropic meshes and general diffusion coefficients.

Finally, we would like to point out that a similar result can be established for $p$-adaptive finite elements without major modifications.


\newpage{}

\ifx\undefined\bysame{}
\newcommand{\bysame}{\leavevmode\hbox to3em{\hrulefill}\,}
\fi{}

\end{document}